\theoremstyle{plain}
\newtheorem{corollary}{Corollary}
\newtheorem{theorem}{Theorem}
\numberwithin{equation}{section}
\begin{document}
\title[Euler and Bernoulli polynomials, Pascal matrix]{A few remarks on
Euler and Bernoulli polynomials and their connections with binomial
coefficients and modified Pascal matrices}
\author{Pawe\l\ J. Szab\l owski}
\address{Department of Mathematics and Information Sciences,\\
Warsaw University of Technology\\
ul. Koszykowa 75, 00-662 Warsaw, Poland}
\email{pawel.szablowski@gmail.com}
\date{June, 2013}
\subjclass[2000]{Primary 11B68, 11B65; Secondary 11B37, 15B36}
\keywords{Euler polynomials, Bernoulli polynomials, Binomial Coefficients,
Pascal matrices}

\begin{abstract}
We prove certain identities involving Euler and Bernoulli polynomials that
can be treated as recurrences. We use these and also other  known identities
to indicate connection of Euler and Bernoulli numbers with entries of
inverses of certain lower triangular built of binomial coefficients. Another
words we interpret Euler and Bernoulli numbers in terms of modified Pascal
matrices.
\end{abstract}

\maketitle

\section{Introduction and notation\label{intr}}

The aim of the paper is to indicate close relationship between Euler and
Bernoulli polynomials and certain lower triangular matrices with entries
depending on binomial coefficients and some other natural numbers. In this
way we point out new interpretation of Euler and Bernoulli numbers.

In the series of papers \cite{Zhang97}, \cite{Zhang98}, \cite{Kim00}, \cite%
{Zhang07} Zhang, Kim and their associates have studied various
generalizations of Pascal matrices and examined their properties. The
results of this paper can be interpreted as the next step in studying
properties of various modifications of Pascal matrices. 

We do so by studying known and indicating new identities involving Euler and
Bernoulli polynomials. One of them particularly simple involves these
polynomials of either only odd or only even degrees.

More precisely we will express these numbers as entries of inverses of
certain matrices build of almost entirely of binomial coefficients.

Throughout the paper we will use the following notation. Let a sequences of
lower triangular matrices $\left\{ A_{n}\right\} _{n\geq 0}$ be such that $%
A_{n}$ is $(n+1)\times (n+1)$ matrix and matrix say $A_{k}$ is a submatrix
of every $A_{n},$ for $n\geq k.$ Notice that the same property can be
attributed to inverses of matrices $A_{n}$ (of course if they exist) and to
products of such matrices. Hence to simplify notation we will denote entire
sequence of such matrices by one symbol. Thus e.g. sequence $%
\{A_{n}\}_{n\geq 0}$ will be dented by $\mathcal{A}$ or $[a_{ij}]$ if $a_{ij}
$ denotes $(i,j)-$th entry of matrices $A_{n},$ $n\geq i.$ The sequence $%
\{A_{n}^{-1}\}_{n\geq 0}$ will be denoted by $\mathcal{A}^{-1}$ or $%
[a_{ij}]^{-1}.$ Analogously if we have two sequences say $\mathcal{A}$ and $%
\mathcal{B}$ then $\mathcal{AB}$ would mean sequence $\left\{
A_{n}B_{n}\right\} _{n\geq 0}.$ It is easy to notice that all such lower
triangular matrices form a non-commutative ring with unity. Moreover this
ring is also a linear space over reals as far as ring's addition is
concerned. Diagonal matrix $\mathcal{I}$ with $1$ on the diagonal is this
ring's unity.

Let us consider also $(n+1)$ vectors $\mathbf{X}^{(n)}\allowbreak \overset{df%
}{=}(1,x,\ldots ,x^{n})^{T},$ $\mathbf{f(X)}^{(n)}\overset{df}{=}%
(1,f(x),\ldots ,f(x)^{n}),$ By $\mathbf{X}$ or by $[x^{i}]$ we will mean
sequence of vectors $\left( \mathbf{X}^{(n)}\right) _{n\geq 0}$ and by $%
\mathbf{f(X)}$ or by $[f(x)^{i}]$ the sequence of vectors $(\mathbf{f(X)}%
^{(n)})_{n\geq 0}.$

Let $E_{n}(x)$ denote $n-$th Euler polynomial and $B_{n}(x)$ $n-$th
polynomial. Let us introduce sequences of vectors $\mathbf{E}%
^{(n)}(x)\allowbreak =\allowbreak (1,2E_{1}(x),\ldots ,2^{n}E_{n}(x))^{T}$
and $\mathbf{B}^{(n)}(x)\allowbreak =\allowbreak (1,2B_{n}(x),\ldots
,2^{n}B_{n}(x))^{T}.$ These sequences will be denoted briefly $\mathbf{E}$
and $\mathbf{B}$ respectively. 

$\left\lfloor x\right\rfloor $ will denote so called 'floor' finction of $x$
that is the largest integer not exceeding $x.$

Since we will use in the sequel often Euler and Bernoulli polynomials we
will recall now briefly the definition of these polynomials. Their
characteristic functions are given by formulae (23.1.1) of \cite{Gradshtein}
and respectively:%
\begin{eqnarray}
\sum_{j\geq 0}\frac{t^{j}}{j!}E_{j}(x)\allowbreak  &=&\allowbreak \frac{%
2\exp (xt)}{\exp (t)+1},  \label{chE} \\
\sum_{j\geq 0}\frac{t^{j}}{j!}B_{j}(x)\allowbreak  &=&\allowbreak \frac{%
t\exp (xt)}{\exp (t)-1}.  \label{chB}
\end{eqnarray}

Numbers $E_{n}\allowbreak =\allowbreak 2^{n}E_{n}(1/2)$ and $%
B_{n}\allowbreak =\allowbreak B_{n}(0)$ are called respectively Euler and
Bernoulli numbers.

By standard manipulation on characteristic functions we obtain for example
the following identities some of which are well known $\forall k\geq 0:$ 
\begin{eqnarray}
2^{k}E_{k}(x)\allowbreak &=&\allowbreak \sum_{j=0}^{k}\binom{k}{j}%
E_{k-j}\times (2x-1)^{j},  \label{ide} \\
B_{k}(x)\allowbreak &=&\allowbreak \sum_{j=0}^{k}\binom{k}{j}B_{k-j}\times
x^{j},\text{~}x^{k}\allowbreak =\allowbreak \sum_{j=0}^{k}\binom{k}{j}\frac{1%
}{k-j+1}B_{j}(x),  \label{idb} \\
E_{k}(x) &=&\sum_{j=0}^{k}\binom{k}{j}2^{j}B_{j}(x)\frac{%
(1-x)^{n-j+1}-(-x)^{n-j+1}}{(n-j+1)},  \label{idbe} \\
E_{k}(x) &=&\sum_{j=0}^{k}\binom{k}{j}2^{j}B_{j}(\frac{x}{2})\frac{1}{k-j+1},
\label{ebyb} \\
B_{k}(x) &=&\sum_{j=0}^{k}\binom{k}{j}2^{j}B_{j}(x)\frac{%
(1-x)^{n-j}+(-x)^{n-j}}{2},  \label{idbb} \\
B_{k}(x) &=&2^{k}B_{k}(\frac{x}{2})+\sum_{j=1}^{k}\binom{k}{j}%
2^{k-j-1}B_{k-j}(\frac{x}{2}),  \label{bbyb}
\end{eqnarray}%
which are obtained almost directly from the following trivial identities
respectively:%
\begin{eqnarray*}
\frac{2\exp (xt)}{\exp (t)+1} &=&\frac{1}{\cosh (t/2)}\times \exp (\frac{t}{2%
}(2x-1)), \\
\frac{t~\exp (xt)}{\exp (t)-1} &=&\frac{t}{\exp (t)-1}\times \exp (xt),~\exp
(xt)\allowbreak =\frac{t~\exp (xt)}{\exp (t)-1}\times \frac{\exp (t)-1}{t},
\\
\frac{2\exp (xt)}{\exp (t)+1} &=&\frac{2t\exp (2xt))}{\exp (2t)-1}\times
(\exp ((1-x)t)-\exp (-xt))/t, \\
\frac{2\exp (xt)}{\exp (t)+1} &=&\frac{2t\exp (\frac{x}{2}(2t)))}{\exp (2t)-1%
}\times (\exp (t)-1)/t, \\
\frac{t~\exp (xt)}{\exp (t)-1} &=&\frac{2t\exp (2xt))}{\exp (2t)-1}\times
(\exp ((1-x)t)+\exp (-xt))/2, \\
\frac{t~\exp (xt)}{\exp (t)-1} &=&\frac{2t\exp (\frac{x}{2}2t))}{\exp (2t)-1}%
\times (\exp (t)+1)/2.
\end{eqnarray*}

By direct calculation one can easily check that:%
\begin{equation*}
\lbrack \binom{i}{j}]^{-1}=[(-1)^{i-j}\binom{i}{j}],~[\lambda ^{i-j}\binom{i%
}{j}]^{-1}=[(-\lambda )^{i-j}\binom{i}{j}],
\end{equation*}%
for any $\lambda .$ The above mentioned identities are well known. They
expose properties of Pascal matrices discussed in \cite{Zhang97}. Similarly
by direct application of (\ref{idb}) we have:%
\begin{equation}
\lbrack \binom{i}{j}\frac{1}{i-j+1}]^{-1}=[\binom{i}{j}B_{i-j}]  \label{imB}
\end{equation}
giving new interpretation of Bernoulli numbers. Now notice that we can
multiply both sides of (\ref{idb}) by say $\lambda ^{k}$ and define new
vectors $[(\lambda x)^{i}]$ and $[\lambda ^{i}B_{i}(x)].$ Thus (\ref{imB})
can be trivially generalized to 
\begin{equation*}
\lbrack \binom{i}{j}\frac{\lambda ^{i-j}}{i-j+1}]^{-1}=[\binom{i}{j}\lambda
^{i-j}B_{i-j}]
\end{equation*}%
for all $\lambda \in \mathbb{R}$, presenting first of the series of
modifications of Pascal matrices and their properties that we will present
in the sequel.

To find inverses of other matrices built of binomial coefficients we will
have to refer to the results of the next section.

\section{Main results\label{main}}

\begin{theorem}
$\forall n\geq 1:$%
\begin{eqnarray}
\sum_{j=0}^{\left\lfloor n/2\right\rfloor }\binom{n}{2\left\lfloor
n/2\right\rfloor -2j}2^{2j+n-2n\left\lfloor n/2\right\rfloor
}E_{2j+n-2\left\lfloor n/2\right\rfloor }(x)\allowbreak &=&\allowbreak
(2x-1)^{n},  \label{tozE} \\
\sum_{j=0}^{\left\lfloor n/2\right\rfloor }\binom{n}{2\left\lfloor
n/2\right\rfloor -2j}2^{2j+n-2n\left\lfloor n/2\right\rfloor }\frac{%
B_{2j+n-2\left\lfloor n/2\right\rfloor }(x)}{2j+1}\allowbreak &=&\allowbreak
(2x-1)^{n}  \label{tozB}
\end{eqnarray}
\end{theorem}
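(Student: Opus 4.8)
The plan is to work directly with the generating functions \eqref{chE} and \eqref{chB} rather than with the polynomial identities, and to extract the stated sums as coefficients. Observe first that the left-hand side of \eqref{tozE} is, up to the book-keeping of the floor function, simply the sum $\sum_{j}\binom{n}{n-2j}2^{?}E_{n-2j}(x)$ taken over those indices congruent to $n$ modulo $2$; more precisely, writing $n=2m+r$ with $r\in\{0,1\}$ so that $\lfloor n/2\rfloor=m$ and $n-2n\lfloor n/2\rfloor=n-2nm$, the summand is $\binom{n}{2m-2j}2^{2j+r}E_{2j+r}(x)$ (one must check the exponent of $2$ reduces to $2j+r$; this is where the awkward $2^{2j+n-2n\lfloor n/2\rfloor}$ must be simplified — note $n-2n\lfloor n/2\rfloor$ is not what a careless reading suggests, and reconciling it with the clean statement is the first bookkeeping hurdle). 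Reindexing $k=2j+r$, the claim \eqref{tozE} becomes
\begin{equation*}
\sum_{\substack{0\le k\le n\\ k\equiv n\!\!\pmod 2}}\binom{n}{k}2^{k}E_{k}(x)=(2x-1)^{n}.
\end{equation*}

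The heart of the argument is then the ``odd/even degree'' identity the abstract advertises. I would multiply \eqref{chE} by $t\mapsto$ itself with $t$ replaced by $-t$ and exploit parity. Concretely, from \eqref{chE} with the substitution $t\to 2t$ we get $\sum_{k\ge 0}\frac{(2t)^k}{k!}E_k(x)=\frac{2e^{2xt}}{e^{2t}+1}=\frac{e^{(2x-1)t}}{\cosh t}$, using the trivial identity already displayed in the excerpt, $\frac{2e^{xt}}{e^{t}+1}=\frac{1}{\cosh(t/2)}e^{(t/2)(2x-1)}$. Now take the even part in $t$: since $\cosh t$ is even, the even part of $\frac{e^{(2x-1)t}}{\cosh t}$ is $\frac{\cosh((2x-1)t)}{\cosh t}$, while the even part of the left side collects exactly the terms with $k$ even. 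Similarly the odd part is $\frac{\sinh((2x-1)t)}{\cosh t}$ and collects the $k$ odd terms. To finish \eqref{tozE} I would multiply whichever of these two identities has the right parity (even part when $n$ is even, odd part when $n$ is odd) by $\cosh t$ or $\sinh t$ as appropriate: e.g. for $n$ even, $\bigl(\sum_{k\ \mathrm{even}}\frac{(2t)^k}{k!}E_k(x)\bigr)\cosh t=\cosh((2x-1)t)$, then read off the coefficient of $t^{n}/n!$ on both sides using $\cosh t=\sum \frac{t^{2i}}{(2i)!}$ and the Cauchy product; the binomial coefficient $\binom{n}{k}$ appears precisely from this convolution, and the right-hand coefficient of $\cosh((2x-1)t)$ is $(2x-1)^n$. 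The odd case is identical with $\sinh$ in place of $\cosh$.

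For \eqref{tozB} the same scheme applies to \eqref{chB}: substituting $t\to 2t$ gives $\sum_{k\ge0}\frac{(2t)^k}{k!}B_k(x)=\frac{2t\,e^{2xt}}{e^{2t}-1}$, and I would instead use the third trivial identity from the excerpt, $e^{xt}=\frac{t\,e^{xt}}{e^{t}-1}\cdot\frac{e^{t}-1}{t}$, suitably rescaled, which expresses $e^{(2x-1)t}$ as $\bigl(\sum_k \frac{(2t)^k}{k!}B_k(x)\bigr)$ times $\frac{e^{2t}-1}{2t}=\sum_{i\ge0}\frac{(2t)^i}{(i+1)!}=\sum_{i\ge0}\frac{2^i t^i}{(i+1)!}$. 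Splitting into even/odd parts of $t$ exactly as before and matching the coefficient of $t^n/n!$ produces $\sum_{k\equiv n(2)}\binom{n}{k}2^k\frac{B_k(x)}{k-?}$ — and the factor $\frac{1}{2j+1}$ in \eqref{tozB} is exactly the contribution of the appropriate term of $\frac{e^{2t}-1}{2t}$, since when $k=2j+r$ the complementary index in the convolution forces the denominator $(k-(n-2j)+\cdots)$; I would track this carefully so the denominator collapses to $2j+1$. The main obstacle is thus not conceptual but notational: verifying that the exponent $2^{2j+n-2n\lfloor n/2\rfloor}$ and the index $2j+n-2\lfloor n/2\rfloor$ in the statement really do match the clean ``$k\equiv n\pmod 2$'' reindexing, and that in the Bernoulli case the convolution denominator simplifies to $2j+1$ rather than something $n$-dependent. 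Everything else is a routine extraction of Taylor coefficients from the parity decomposition of \eqref{chE} and \eqref{chB}.
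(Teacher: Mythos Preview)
Your approach to \eqref{tozE} is essentially the paper's: the paper multiplies the generating function \eqref{chE} by $\cosh(t/2)$ to obtain $\exp\bigl(t(x-\tfrac12)\bigr)$, expands $\cosh(t/2)=\sum_{j\ge0}t^{2j}/\bigl(2^{2j}(2j)!\bigr)$, and equates coefficients of $t^{n}/n!$. Your rescaling $t\to 2t$ followed by the even/odd split is a harmless detour, since $\cosh t$ is already even: multiplying the full series $\sum_k\frac{(2t)^k}{k!}E_k(x)$ by $\cosh t$ and extracting the coefficient of $t^n$ automatically isolates the terms with $k\equiv n\pmod 2$. (In both parities one multiplies by $\cosh t$; $\sinh t$ never enters.)

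For \eqref{tozB} there is a genuine gap. The product $\bigl(\sum_k\frac{(2t)^k}{k!}B_k(x)\bigr)\cdot\frac{e^{2t}-1}{2t}$ equals $e^{2xt}$, not $e^{(2x-1)t}$ as you assert; moreover $\frac{e^{2t}-1}{2t}$ is neither even nor odd in $t$, so the parity split no longer picks out $k\equiv n\pmod 2$ cleanly, and the convolution denominator will not collapse as you hope. The paper instead multiplies \eqref{chB} by
\[
\frac{2\sinh(t/2)}{t}=\sum_{j\ge0}\frac{t^{2j}}{2^{2j}(2j+1)!},
\]
which \emph{is} even and yields exactly $\exp\bigl(t(x-\tfrac12)\bigr)$; the factor $1/(2j+1)$ then comes straight from the $(2j+1)!$ in this expansion. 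In your scaling this amounts to replacing $\frac{e^{2t}-1}{2t}$ by $\frac{\sinh t}{t}$, after which the argument runs exactly parallel to the Euler case with no preliminary parity split needed.
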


\begin{proof}
We start with the following identities: 
\begin{eqnarray*}
\cosh (t/2)\frac{2\exp (xt)}{\exp (t)+1}\allowbreak  &=&\allowbreak \exp
(t(x-1/2)), \\
\frac{t\exp (xt)}{\exp (t)-1}\frac{2\sinh (t/2)}{t}\allowbreak  &=&\exp
(t(x-1/2))\allowbreak .
\end{eqnarray*}
Reacall that we also have: 
\begin{equation*}
\cosh (t/2)\allowbreak =\allowbreak \sum_{j\geq 0}\frac{t^{2j}}{2^{2j}(2j)!},%
\frac{2\sinh (t/2)}{t}\allowbreak =\allowbreak \sum_{j\geq 0}\frac{t^{2j}}{%
2^{2j}(2j)!(2j+1)}.
\end{equation*}%
So applying the standard Cauchy multiplication of two series we get
respectively:%
\begin{eqnarray}
\sum_{n\geq 0}\frac{t^{n}}{n!2^{n}}(2x-1)^{n} &=&\sum_{j\geq 0}\frac{t^{2j}}{%
2^{2j}(2j)!}\sum_{j\geq 0}\frac{t^{j}}{j!}E_{j}(x)\allowbreak   \label{_E} \\
&=&\sum_{n\geq 0}\frac{t^{n}}{n!}\sum_{j=0}^{n}\binom{n}{j}c_{j}E_{n-j}(x),
\\
\sum_{n\geq 0}\frac{t^{n}}{n!2^{n}}(2x-1)^{n}\allowbreak  &=&\allowbreak
\sum_{j\geq 0}\frac{t^{2j}}{2^{2j}(2j)!(2j+1)}\sum_{j\geq 0}\frac{t^{j}}{j!}%
B_{j}(x)\allowbreak   \label{_B} \\
&=&\allowbreak \sum_{n\geq 0}\frac{t^{n}}{n!}\sum_{j=0}^{n}\binom{n}{j}%
c_{j}^{^{\prime }}B_{n-j}(x),
\end{eqnarray}%
where we denoted by $c_{n}$ and $c_{n}^{^{\prime }}$ the following numbers:%
\begin{equation*}
c_{n}=\left\{ 
\begin{array}{ccc}
\frac{1}{2^{n}} & if & n=2\left\lfloor n/2\right\rfloor  \\ 
0 & if & \text{otherwise}%
\end{array}%
\right. ,~c_{n}^{^{\prime }}=\left\{ 
\begin{array}{ccc}
\frac{1}{2^{n}(n+1)} & if & n=2\left\lfloor n/2\right\rfloor  \\ 
0 & if & \text{otherwise}%
\end{array}%
\right. .
\end{equation*}%
Making use of uniqueness of characteristic functions we can equate functions
of $x$ standing by $t^{n}.$ Finally let us multiply both sides so obtained
identities by $2^{n}.$ We have obtained (\ref{tozE}) and (\ref{tozB}).
\end{proof}

We have the following other result:

\begin{theorem}
Let $e(i)\allowbreak =\allowbreak \left\{ 
\begin{array}{ccc}
0 & if & i\text{ is odd} \\ 
1 & if & i\text{ is even}%
\end{array}%
\right. ,$ then%
\begin{eqnarray}
\mathcal{[}e\mathcal{(}i-j)\binom{i}{j}]^{-1} &=&[\binom{i}{j}E_{i-j}],
\label{invE} \\
\mathcal{[}e(i-j)\binom{i}{j}\frac{1}{i-j+1}]^{-1} &=&\mathcal{[}\binom{i}{j}%
\sum_{k=0}^{i-j}\binom{i-j}{k}2^{k}B_{k}].  \label{invB}
\end{eqnarray}
\end{theorem}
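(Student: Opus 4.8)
The plan is to read both identities as statements about inverses in the ring of lower-triangular matrices $\mathcal{A} = [a_{ij}]$, so it suffices to exhibit, for each claimed inverse, a one-sided (hence two-sided) inverse and verify the product is $\mathcal{I}$. For \eqref{invE}, write $\mathcal{A} = [e(i-j)\binom{i}{j}]$ and $\mathcal{B} = [\binom{i}{j}E_{i-j}]$, where $E_m = 2^m E_m(1/2)$; I want to show $(\mathcal{AB})_{ij} = \delta_{ij}$, i.e. for $i > j$,
\[
\sum_{k=j}^{i} e(i-k)\binom{i}{k}\binom{k}{j}E_{k-j} = 0 .
\]
Using the standard trinomial revision $\binom{i}{k}\binom{k}{j} = \binom{i}{j}\binom{i-j}{k-j}$, pulling out $\binom{i}{j}$ and setting $m = i-j$, $\ell = k-j$, this becomes $\sum_{\ell=0}^{m} e(m-\ell)\binom{m}{\ell}E_{\ell} = 0$ for every $m \ge 1$. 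Now observe that $e(m-\ell)$ is exactly the indicator that $m - \ell$ is even, i.e. that $m$ and $\ell$ have the same parity; so the sum is $\sum_{\ell \equiv m\,(2)} \binom{m}{\ell} E_\ell$. But this is precisely the left side of \eqref{tozE} evaluated at $x = 1/2$: indeed in \eqref{tozE} the summation index runs over $2j + n - 2\lfloor n/2\rfloor$, which ranges over exactly the integers in $\{0,\dots,n\}$ having the same parity as $n$, and the weight $2^{2j+n-2n\lfloor n/2\rfloor}E_{2j+n-2\lfloor n/2\rfloor}(x)$ at $x=1/2$ is $E_{\ell}$ in our notation. Hence the left side of \eqref{tozE} at $x=1/2$ equals $(2\cdot\tfrac12 - 1)^n = 0$ for all $n \ge 1$, which is exactly the vanishing we need. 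The case $i = j$ (where $m=0$) gives the single term $E_0 = 1$, so the diagonal is correct, and lower-triangularity of both factors makes $\mathcal{AB}$ lower triangular; thus $\mathcal{AB} = \mathcal{I}$ and \eqref{invE} follows.

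For \eqref{invB} I would run the same reduction. Put $\mathcal{A}' = [e(i-j)\binom{i}{j}\frac{1}{i-j+1}]$ and $\mathcal{B}' = [\binom{i}{j}\sum_{k=0}^{i-j}\binom{i-j}{k}2^k B_k]$. The inner sum $\sum_{k=0}^{m}\binom{m}{k}2^k B_k$ is, by \eqref{idb} read at a suitable argument (or directly from the generating function $\frac{t e^t}{e^t-1}$, which is $\sum_m \frac{t^m}{m!}\sum_k \binom{m}{k}B_k = \sum_m \frac{t^m}{m!}(-1)^m B_m$ after the reflection $B_m(1) = (-1)^m B_m$), a shifted Bernoulli-type number; call it $\beta_m$. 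After the same $\binom{i}{k}\binom{k}{j} = \binom{i}{j}\binom{i-j}{k-j}$ manipulation, proving $\mathcal{A}'\mathcal{B}' = \mathcal{I}$ reduces to showing, for each $m \ge 1$,
\[
\sum_{\ell=0}^{m} e(m-\ell)\binom{m}{\ell}\frac{1}{m-\ell+1}\,\beta_\ell = 0 ,
\]
i.e. the sum over $\ell$ of the same parity as $m$ of $\binom{m}{\ell}\frac{\beta_\ell}{m-\ell+1}$ vanishes. This is the analogue of \eqref{tozB} at $x=0$: in \eqref{tozB} the Bernoulli weight appears as $\frac{B_{\ell}(x)}{2j+1}$ with $2j+1 = \ell - (n - 2\lfloor n/2\rfloor) + 1$, and at $x=0$ one has $B_\ell(0) = B_\ell$; matching the $\frac{1}{m-\ell+1}$ factor against the definition of $\beta_\ell$ via the extra $e^t$ (equivalently the $(\exp(t)-1)/t$ block that appears in the identities preceding the theorem) shows the left side of \eqref{tozB}, suitably specialized, equals $(2x-1)^n\big|_{x=0} = (-1)^n$ — but crucially the off-diagonal ($m\ge 1$) combination telescopes to $0$ once the diagonal term is separated. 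So again $\mathcal{A}'\mathcal{B}' = \mathcal{I}$.

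The main obstacle is bookkeeping the several reparametrizations simultaneously: (i) translating the $\lfloor n/2\rfloor$-indexed sums of the first Theorem into clean "same parity as $m$" sums, (ii) identifying the inner binomial sum in \eqref{invB} with the correct specialization of a Bernoulli polynomial (i.e. pinning down which argument $B_k$ is evaluated at, and confirming the normalization $B_k = B_k(0)$ versus the $2^k$ factors), and (iii) making sure the $\frac{1}{i-j+1}$ weight in $\mathcal{A}'$ lines up with the $\frac{1}{2j+1}$ weight in \eqref{tozB} after the index shift. Once those three dictionaries are fixed, both claims are immediate corollaries of the first Theorem evaluated at $x = 1/2$ and $x = 0$ respectively, together with the trinomial revision identity and the observation that an inverse in this matrix ring is determined by a single one-sided check on the strictly-lower-triangular part plus the diagonal. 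I would also double-check the edge case $m = 0$ separately in each part to confirm the diagonal entries are $1$ (using $E_0 = 1$, $B_0 = 1$), since the generating-function argument of Theorem 1 is only asserted for $n \ge 1$.
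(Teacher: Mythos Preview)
Your argument for \eqref{invE} is correct and is essentially the paper's own proof specialized at a point: the paper substitutes $x\mapsto (x+1)/2$ in \eqref{tozE} to turn $(2x-1)^n$ into $x^n$, then reads off the two change-of-basis matrices between $[x^i]$ and $[W_i(x)]$ where $W_i(x)=2^iE_i((x+1)/2)$; your direct check of $(\mathcal{AB})_{ij}=\delta_{ij}$ amounts to evaluating that polynomial identity at $x=0$ (equivalently, \eqref{tozE} at $x=1/2$, where $2^jE_j(1/2)=E_j$). The trinomial revision and the parity bookkeeping are done correctly.

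The argument for \eqref{invB}, however, has a genuine gap. After your trinomial reduction you need
\[
\sum_{\ell=0}^{m} e(m-\ell)\binom{m}{\ell}\frac{1}{m-\ell+1}\,\beta_\ell=[m=0],\qquad \beta_\ell=\sum_{k=0}^\ell\binom{\ell}{k}2^kB_k,
\]
and you propose to get this from \eqref{tozB} at $x=0$. But at $x=0$ the weight appearing in \eqref{tozB} is $2^jB_j(0)=2^jB_j$, which is \emph{not} $\beta_j$ (e.g.\ $2B_1=-1$ while $\beta_1=0$), and the right-hand side is $(-1)^n$, not $0$; the subsequent ``telescopes to $0$ once the diagonal term is separated'' is not justified and does not follow. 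The correct specialization is $x=1/2$, exactly as in your treatment of \eqref{invE}: one checks (via the generating function $\sum_\ell \beta_\ell t^\ell/\ell!=t/\sinh t$, which is precisely what the paper invokes) that $2^jB_j(1/2)=\beta_j$, and then \eqref{tozB} at $x=1/2$ gives right-hand side $0$ for $n\ge1$ and left-hand side $\sum_j e(n-j)\binom{n}{j}\beta_j/(n-j+1)$, which is the identity you need. Your parenthetical about $te^t/(e^t-1)$ and $B_m(1)=(-1)^mB_m$ identifies the wrong sequence (that is $B_m(1)$, not $\beta_m$); the relevant factorization is $\frac{2t}{e^{2t}-1}\cdot e^t=\frac{t}{\sinh t}$. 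Once you fix the evaluation point and this identification, your direct matrix-product verification goes through and matches the paper's change-of-basis argument with $V_j(x)=2^jB_j((x+1)/2)$.
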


\begin{proof}
Let us define by $W_{n}(x)\allowbreak =\allowbreak 2^{n}E_{n}((x+1)/2)$ and $%
V_{n}(x)\allowbreak =\allowbreak 2^{n}B_{n}((x+1)/2).$ Notice that
characteristic function of polynomials $W_{n}$ and $V_{n}$ are given by 
\begin{eqnarray*}
\sum_{j\geq 0}\frac{t^{j}}{j!}W_{j}(x) &=&\sum_{j\geq 0}\frac{(2t)^{j}}{j!}%
E_{j}((x+1)/2) \\
&=&\frac{2\exp (2t(x+1)/2)}{\exp (2t)+1}\allowbreak =\allowbreak \frac{\exp
(tx)}{\cosh (t)}, \\
\sum_{j\geq 0}\frac{t^{j}}{j!}V_{j}(x) &=&\sum_{j\geq 0}\frac{(2t)^{j}}{j!}%
B_{j}((x+1)/2) \\
&=&\frac{\exp (2t(x+1)/2)2t}{\exp (2t)-1}=\frac{t\exp (tx)}{\sinh (t)}
\end{eqnarray*}%
Now recall that $\frac{1}{\cosh (t)}$ is a characteristic function of Euler
numbers while $\frac{t}{\sinh t}$ equal to the characteristic function of
numbers $\left\{ \sum_{j=0}^{n}\binom{n}{j}2^{j}B_{j}\right\} _{n\geq 0}.$
Hence on one hand see that%
\begin{eqnarray*}
W_{n}(x)\allowbreak &=&\allowbreak \sum_{j=0}^{n}\binom{n}{j}x^{n-j}E_{j}, \\
V_{n}(x) &=&\sum_{j=0}^{n}\binom{n}{j}x^{n-j}\sum_{k=0}^{j}\binom{j}{k}%
2^{k}B_{k}
\end{eqnarray*}%
On the other substituting $x$ by $(x+1)/2$ in (\ref{tozE}) and (\ref{tozB})
we see that 
\begin{eqnarray*}
x^{n}\allowbreak &=&\allowbreak \sum_{j=0}^{n}e(n-j)\binom{n}{j}W_{j}(x), \\
x^{n} &=&\sum_{j=0}^{n}e(n-j)\binom{n}{j}\frac{1}{n-j+1}V_{j}
\end{eqnarray*}%
By uniqueness of the polynomial expansion we deduce (\ref{invE}) and (\ref%
{invB}).
\end{proof}

As a corollary we get the following result following also well known
properties of lower triangular matrices (see e.g. : \cite{Hand97}).

\begin{corollary}
\begin{eqnarray*}
\lbrack \binom{2i}{2j}]^{-1} &=&[\binom{2i}{2j}E_{2(i-j)}], \\
\lbrack \binom{2i}{2j}\frac{1}{2(i-j)+1}]^{-1} &=&[\binom{2i}{2j}%
\sum_{k=0}^{2i-2j}\binom{2i-2j}{k}2^{k}B_{k}].
\end{eqnarray*}
\end{corollary}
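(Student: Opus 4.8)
The plan is to derive the Corollary directly from Theorem~2 by restricting to even-indexed rows and columns. Observe that the matrix $[e(i-j)\binom{i}{j}]$ has the following block structure: the entry in position $(i,j)$ is nonzero only when $i-j$ is even, i.e. when $i$ and $j$ have the same parity. Consequently the matrices $E_{n}$ (for $n$ even, say) decompose, after a permutation of indices separating even from odd, into a direct sum of two lower triangular blocks — one indexed by even integers with entries $\binom{2i}{2j}$, the other indexed by odd integers with entries $\binom{2i+1}{2j+1}$. Since the inverse of a block-diagonal matrix is the block-diagonal matrix of the inverses, and since inversion commutes with extracting a principal submatrix that is itself a ``closed'' block (here the even-even block), it follows that $[\binom{2i}{2j}]^{-1}$ equals the even-even block of $[e(i-j)\binom{i}{j}]^{-1}$.

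The key steps, in order, are: first, record that for a lower triangular matrix which is block diagonal with respect to a partition of the index set into initial segments stable under the triangular order — or more simply, invoke the standard fact (cited as \cite{Hand97}) that if a lower triangular matrix $A$ satisfies $a_{ij}=0$ whenever $i,j$ lie in different classes of a partition compatible with the ordering, then $A^{-1}$ inherits the same zero pattern and its restriction to one class is the inverse of the restriction of $A$ to that class. Second, apply this with $A=[e(i-j)\binom{i}{j}]$ and the partition ``even indices / odd indices'': since $e(i-j)=0$ precisely when $i,j$ differ in parity, $A$ has the required block structure. Third, read off from \eqref{invE} that the $(i,j)$ entry of $A^{-1}$ is $\binom{i}{j}E_{i-j}$; restricting to even $i=2a$, even $j=2b$ gives $\binom{2a}{2b}E_{2(a-b)}$, which is exactly the claimed formula for $[\binom{2i}{2j}]^{-1}$. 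Fourth, repeat verbatim with $A=[e(i-j)\binom{i}{j}\frac{1}{i-j+1}]$ and \eqref{invB}, whose $(i,j)$ entry $\binom{i}{j}\sum_{k=0}^{i-j}\binom{i-j}{k}2^{k}B_{k}$ becomes, at $i=2a$, $j=2b$, the quantity $\binom{2a}{2b}\sum_{k=0}^{2a-2b}\binom{2a-2b}{k}2^{k}B_{k}$, giving the second displayed identity.

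I do not anticipate a serious obstacle: the whole argument is the observation that parity-block structure is preserved under inversion of triangular matrices, together with a direct substitution into the two formulas of Theorem~2. The only point requiring a little care is to phrase correctly, in the language of the sequences-of-matrices notation $\mathcal{A}$, $\mathcal{A}^{-1}$ set up in Section~\ref{intr}, the claim that passing to the even-even subsequence commutes with inversion; this is immediate because each finite matrix $A_{2n}$ already exhibits the block decomposition and the even-even block of $A_{2n}$ is again a member of the sequence $[\binom{2i}{2j}]$. Hence the proof is essentially a one-paragraph deduction: invoke the triangular-matrix fact, specialize \eqref{invE} and \eqref{invB} to even indices, and conclude.
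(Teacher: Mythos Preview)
Your proposal is correct and follows essentially the same approach as the paper: the paper's proof consists of the single remark that the corollary follows from Theorem~2 together with ``well known properties of lower triangular matrices'' (citing \cite{Hand97}), and you have simply spelled out what that property is --- that the parity-block pattern $e(i-j)=0$ forces a block-diagonal decomposition whose inverse is computed blockwise --- and then substituted even indices into \eqref{invE} and \eqref{invB}. There is no meaningful difference in strategy; you have just made explicit the one-line deduction the paper leaves to the reader.
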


As in Section \ref{intr} we can multiply both sides of (\ref{tozE}) and (\ref%
{tozB}) by $\lambda ^{n}$ and redefine appropriate vectors and rephrase out
results in terms of modified Pascal matrices.

\begin{corollary}
Forr all $\lambda \in \mathbb{R}$:%
\begin{eqnarray}
\mathcal{[}e\mathcal{(}i-j)\binom{i}{j}\lambda ^{i-j}]^{-1} &=&[\binom{i}{j}%
\lambda ^{i-j}E_{i-j}],  \label{pE} \\
\mathcal{[}e(i-j)\binom{i}{j}\frac{\lambda ^{i-j}}{i-j+1}]^{-1} &=&\mathcal{[%
}\binom{i}{j}\lambda ^{i-j}\sum_{k=0}^{i-j}\binom{i-j}{k}2^{k}B_{k}],
\label{pBB} \\
\lbrack \binom{2i}{2j}\lambda ^{i-j}]^{-1} &=&[\binom{2i}{2j}\lambda
^{i-j}E_{2(i-j)}],  \label{p2E} \\
\lbrack \binom{2i}{2j}\frac{\lambda ^{i-j}}{2(i-j)+1}]^{-1} &=&[\binom{2i}{2j%
}\lambda ^{i-j}\sum_{k=0}^{2i-2j}\binom{2i-2j}{k}2^{k}B_{k}].  \label{p2B}
\end{eqnarray}
\end{corollary}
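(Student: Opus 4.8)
The plan is to derive all four identities from the unscaled versions already in hand---namely (\ref{invE}), (\ref{invB}), and the two identities of the preceding Corollary---by a single diagonal conjugation, which is the same rescaling device that was used for (\ref{imB}) in Section \ref{intr}.

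Fix $\lambda \neq 0$ and let $\mathcal{D}=[\delta_{ij}\lambda^{i}]$ be the sequence of diagonal matrices carrying $\lambda^{0},\lambda^{1},\ldots$ down the diagonal; it is a unit of our ring, with $\mathcal{D}^{-1}=[\delta_{ij}\lambda^{-i}]$. For any lower triangular sequence $\mathcal{M}=[m_{ij}]$ a direct computation gives $\mathcal{D}\,\mathcal{M}\,\mathcal{D}^{-1}=[\lambda^{i-j}m_{ij}]$, and conjugation by a fixed unit preserves inverses: $(\mathcal{D}\,\mathcal{M}\,\mathcal{D}^{-1})^{-1}=\mathcal{D}\,\mathcal{M}^{-1}\,\mathcal{D}^{-1}$ whenever $\mathcal{M}^{-1}$ exists. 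Taking $\mathcal{M}=[e(i-j)\binom{i}{j}]$ and invoking (\ref{invE}) yields (\ref{pE}) at once; taking $\mathcal{M}=[e(i-j)\binom{i}{j}\frac{1}{i-j+1}]$ and (\ref{invB}) yields (\ref{pBB}). For (\ref{p2E}) and (\ref{p2B}) the one point to watch is that the matrices there are indexed by $i,j$ yet carry the entries $\binom{2i}{2j}$, so conjugating by the very same $\mathcal{D}$ multiplies the $(i,j)$ entry by $\lambda^{i-j}$---not by $\lambda^{2(i-j)}$---and the two identities of the Corollary become exactly (\ref{p2E}) and (\ref{p2B}).

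It remains to dispose of $\lambda=0$, where $\mathcal{D}$ is not invertible and the argument above does not apply. This case is trivial: every left-hand matrix collapses to $\mathcal{I}$ (only the diagonal entries, with $\lambda^{0}=1$, survive), whose inverse is $\mathcal{I}$, while on the right the convention $0^{0}=1$ together with $E_{0}=1$ and $\sum_{k=0}^{0}\binom{0}{k}2^{k}B_{k}=B_{0}=1$ again produces $\mathcal{I}$. Alternatively one may treat both sides as matrix-valued polynomials in $\lambda$ and appeal to continuity.

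I do not anticipate a real obstacle: the statement is just a formal rescaling of results already established. The only places that call for a little care are the bookkeeping of the $\lambda$-exponent in the $\binom{2i}{2j}$-indexed matrices of (\ref{p2E})--(\ref{p2B}) and the degenerate value $\lambda=0$. One could equally well avoid conjugation and proceed as the paper suggests: multiply the polynomial identities $x^{n}=\sum_{j}e(n-j)\binom{n}{j}W_{j}(x)$ and $W_{n}(x)=\sum_{j}\binom{n}{j}x^{n-j}E_{j}$ (together with their $V_n$-analogues) through by $\lambda^{n}$, use $\lambda^{n}x^{n-j}=\lambda^{j}(\lambda x)^{n-j}$, and read off the matrices relating the rescaled vector sequences $[(\lambda x)^{i}]$ and $[\lambda^{i}W_{i}(x)]$; this reproduces (\ref{pE})--(\ref{p2B}) directly.
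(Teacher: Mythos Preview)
Your proposal is correct and follows essentially the same approach as the paper: the paper's one-line justification is ``multiply both sides of (\ref{tozE}) and (\ref{tozB}) by $\lambda^{n}$ and redefine appropriate vectors,'' which is precisely the rescaling that your diagonal conjugation $\mathcal{M}\mapsto\mathcal{D}\mathcal{M}\mathcal{D}^{-1}$ implements at the matrix level. Your treatment is in fact a bit more complete, since you handle the indexing subtlety in (\ref{p2E})--(\ref{p2B}) and the degenerate case $\lambda=0$ explicitly, neither of which the paper comments on.
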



\begin{thebibliography}{9}
\bibitem{Srivast04} Srivastava, H. M.; Pint\'{e}r, \'{A}. \emph{Remarks on
some relationships between the Bernoulli and Euler polynomials.} Appl. Math.
Lett. \textbf{17} (2004), no. 4, 375--380. MR2045740 (2004m:33020)

\bibitem{Zhang97} Zhang, Zhizheng. \emph{The linear algebra of the
generalized Pascal matrix.} Linear Algebra Appl. 250 (1997), 51--60.
MR1420570 (97g:15014)

\bibitem{Zhang98} Zhang, Zhizheng; Liu, Maixue. \emph{An extension of the
generalized Pascal matrix and its algebraic properties.} Linear Algebra
Appl. 271 (1998), 169--177. MR1485166 (98m:15045)

\bibitem{Zhang07} Zhang, Zhizheng; Wang, Xin. \emph{A factorization of the
symmetric Pascal matrix involving the Fibonacci matrix.} Discrete Appl.
Math. 155 (2007), no. 17, 2371--2376. MR2360667 (2008h:11013)

\bibitem{Kim00} Cheon, Gi-Sang; Kim, Jin-Soo. \emph{Stirling matrix via
Pascal matrix}. Linear Algebra Appl. 329 (2001), no. 1-3, 49--59. MR1822221
(2002b:11030)

\bibitem{Hand97} L\"{u}tkepohl, H. \emph{Handbook of matrices.} John Wiley
\& Sons, Ltd., Chichester, 1996. xvi+304 pp. ISBN: 0-471-97015-8 MR1433592
(97i:15001)

\bibitem{Gradshtein} Gradshteyn, I. S.; Ryzhik, I. M. \emph{Table of
integrals, series, and products.} Translated from the Russian. Translation
edited and with a preface by Alan Jeffrey and Daniel Zwillinger. With one
CD-ROM (Windows, Macintosh and UNIX). Seventh edition. Elsevier/Academic
Press, Amsterdam, 2007. xlviii+1171 pp. ISBN: 978-0-12-373637-6;
0-12-373637-4 MR2360010 (2008g:00005)
\end{thebibliography}
\end{document}